\newtheorem{thm}{Theorem}[section]
\newtheorem{lem}[thm]{Lemma}
\newtheorem{prop}[thm]{Proposition}
\newtheorem{conj}[thm]{Conjecture}
\theoremstyle{definition}
\newtheorem{assn}[thm]{Assumption}
\newcommand{\E}{\mathbb{E}}
\newcommand{\PP}{\mathbb{P}}
\newcommand{\R}{\mathbb{R}}
\newcommand{\Z}{\mathbb{Z}}
\newcommand{\e}{\varepsilon}
\newcommand{\F}{\mathcal F}
\newcommand{\rd}{{\rm d} }
\newcommand{\lpl}{(\Lambda \cup \partial \Lambda)}
\definecolor{aocolor}{rgb}{0.0, 0.5, 0.0}
\DeclareMathOperator*{\argmin}{argmin}
\DeclareMathOperator*{\sign}{\mathrm{sign}}
\begin{document}

\title[]{Absence of Disorder Chaos for Ising Spin Glasses on $\Z^d$}

\author[L.-P. Arguin]{L.-P. Arguin}            
 \address{L.-P. Arguin\\ 
 Department of Mathematics\\
 City University of New York, Baruch College and Graduate Center\\
 New York, NY 10010}
\thanks{The research of L.-P. A. is supported in part by NSF CAREER~DMS-1653602.}
\email{louis-pierre.arguin@baruch.cuny.edu}

\author[J. Hanson]{J. Hanson}
\address{J. Hanson\\
Department of Mathematics\\
 City University of New York, City College\\
 New York, NY 10031
}            

\thanks{The research of J.H. is supported in part by NSF~DMS-1612921 .}
\email{jhanson@ccny.cuny.edu}

\date{September 19, 2019}

\keywords{Spin Glasses, Disorder Chaos} \subjclass[2010]{Primary: 82B44}

\maketitle

\begin{abstract}
We identify simple mechanisms that prevent the onset of disorder chaos for the Ising spin glass model on $\Z^d$. This was first shown by Chatterjee in the case of Gaussian couplings. We present three proofs of the theorem for general couplings with continuous distribution based on the presence in the coupling realization of stabilizing features of positive density. 
\end{abstract}

\section{Introduction}

\subsection{Main Result}
Consider a square box $\Lambda$ in $\Z^d$ with edge set $\Lambda^*$ and exterior vertex boundary $\partial \Lambda$. The Hamiltonian of the Ising spin glass, or Edwards-Anderson model, is
\begin{equation}
\label{eqn: Hamilton}
H_{\Lambda,J}(s)=\sum_{\{x,y\}\in (\Lambda\cup \partial \Lambda)^*} -J_{xy}s_xs_y, \quad s\in \{-1,+1\}^\Lambda\ ,
\end{equation}
where the couplings $J=(J_{x,y},\{x,y\}\in (\Lambda \cup \partial \Lambda)^*)$ are IID random variables under some probability $\PP$. The distribution of the couplings is usually taken to be symmetric, but this will not be necessary for the proofs. 
The choice of boundary condition corresponds to setting $s$ on $\partial \Lambda$. This choice will not play a role in the result.

The ground state $\sigma(J)$ at a realization $J$ of the coupling is the minimizer of $H_{\Lambda,J}$:
$$
\sigma(J)=\argmin_{s\in \{-1,+1\}^\Lambda}H_{\Lambda,J}(s)\ .
$$
This implies that the flip of spins in any subset $\mathcal B\subseteq \Lambda$ must increase the energy, yielding the equivalent characterization of the ground state:
\begin{equation}
\label{eqn: GS prop}
\sum_{\{x,y\}\in \partial \mathcal B}J_{xy}\sigma_x(J)\sigma_y(J)>0 \qquad \forall \mathcal B\subseteq \Lambda\ ,
\end{equation}
where $\{x,y\} \in \partial \mathcal{B}$ means $x \in \mathcal{B}$ and $y \notin \mathcal{B}$ (or vice-versa).

In the case where the Hamiltonian admits a global spin symmetry (e.g., with periodic boundary conditions), there is a trivial degeneracy for the ground state. 
One can then work on spin configurations modulo the spin flip, and speak of ground state pair. 
Since the arguments presented below are identical in this framework, we will omit the distinction in the notation. 

There are also non-trivial degeneracies at some special values of the couplings corresponding to values where one subset has a zero flip energy, i.e., the left-hand side of Equation \eqref{eqn: GS prop} is $0$. The set of these critical values is given by 
\begin{equation}
\label{eqn: critical set}
\mathcal C=\bigcup_{s,s'\in \{-1,+1\}^{\Lambda}, s\neq s'} \Big\{J: \sum_{\{x,y\}\in (\Lambda\cup \partial \Lambda)^*} J_{xy}(s_xs_y-s'_xs'_y)=0\Big\}\ .
\end{equation}
The ground state is well defined on the open set $\R^{(\Lambda \cup \partial \Lambda)^*}\setminus \mathcal C$.
Note that by the continuity of the distribution we have $\PP(\mathcal C)=0$.

The phenomenon of disorder chaos in spin glasses was proposed in the physics literature in \cite{fisher-huse} and in \cite{bray-moore}.
Roughly speaking, the model exhibits disorder chaos if the ground state at $J$ and the one at a value very close to $J$ differ substantially. 
To make this precise, consider the overlap
\begin{equation}
Q_\Lambda(\sigma,\sigma')=\frac{1}{|\Lambda^*|}\sum_{\{x,y\}\in \Lambda^*}\sigma_x\sigma_y\sigma'_x\sigma'_y\ .
\end{equation}
For the perturbations, we also consider two IID copies $\e,\e'$ of a continuous random variable on $\PP$ independent of $J$ and having mean $0$, and a parameter $t\geq0$ controlling the magnitude.
The main result is a proof of absence of disorder chaos in the sense that the average of the overlap between two ground states with slightly different couplings is bounded away from $0$ uniformly in $\Lambda$. \begin{thm}\label{thm: main}
For any $\delta>0$, there exists $t_0=t_0(\delta)>0$ such that 
$$
\E[Q_\Lambda(\sigma(J+t\e),\sigma(J+t\e'))]>(1-\delta) \PP(A)\\ , \text{ for any $t<t_0(\delta)$,}
$$
where $A$ is some explicit event with $\PP(A)>0$ uniformly in $\Lambda$.
\end{thm}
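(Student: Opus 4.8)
The starting point is a conditioning identity. For a fixed bond $e=\{x,y\}\in\Lambda^*$, the random variables $\sigma_x(J+t\e)\sigma_y(J+t\e)$ and $\sigma_x(J+t\e')\sigma_y(J+t\e')$ are the same measurable function evaluated at $(J,\e)$ and at $(J,\e')$; since $\e,\e'$ are i.i.d.\ and independent of $J$, they are conditionally i.i.d.\ given $J$. Hence, writing $\tau_e(J):=\E[\sigma_x(J+t\e)\sigma_y(J+t\e)\mid J]\in[-1,1]$,
\[
\E\big[\sigma_x(J+t\e)\sigma_y(J+t\e)\,\sigma_x(J+t\e')\sigma_y(J+t\e')\mid J\big]=\tau_e(J)^2\ge 0 .
\]
Averaging over $e$ gives $\E[Q_\Lambda(\sigma(J+t\e),\sigma(J+t\e'))]=\frac1{|\Lambda^*|}\sum_{e\in\Lambda^*}\E[\tau_e(J)^2]\ge 0$ with no hypotheses at all; the whole task is thus to produce, with $\Lambda$-uniform positive density, bonds for which $\tau_e(J)^2$ is close to $1$. (Note that only a positive density of such bonds is needed, so the conclusion is a small positive lower bound, as befits "absence of disorder chaos".)

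This is where a \emph{stabilizing feature} enters: a local event $A$, depending only on the couplings in a bounded neighbourhood of a reference bond $e_0=\{x,y\}$, on which the sign of $e_0$ in the ground state is forced by a fixed margin and is therefore insensitive to $O(t)$ perturbations near $e_0$. The simplest instance is single-bond domination. Fix a margin $m>0$ and set $A=\big\{|J_{e_0}|>m+\sum_{e'\ni x,\,e'\neq e_0}|J_{e'}|\big\}$. Applying the ground-state characterization \eqref{eqn: GS prop} to $\mathcal B=\{x\}$ forces $\sigma_x(J)\sigma_y(J)=\sign(J_{e_0})$ on $A$; and since passing from $J$ to $J+t\e$ changes $|J_{e_0}|-\sum_{e'\ni x,\,e'\neq e_0}|J_{e'}|$ by at most $t\sum_{e''\ni x}|\e_{e''}|$, the same conclusion holds for $\sigma(J+t\e)$ on the further event $\{t\sum_{e''\ni x}|\e_{e''}|<m\}$, which has probability $1-\rho(t)$ with $\rho(t)\to0$ as $t\to0$ \emph{uniformly in $\Lambda$} (it involves only the $2d$ perturbation variables at $x$, so $\rho$ depends on neither $J$ nor $\Lambda$). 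Consequently $\tau_{e_0}(J)^2\ge(1-2\rho(t))^2$ on $A$, whence, using $\tau_e(J)^2\ge0$ for the remaining bonds together with translation invariance (periodic boundary conditions; in general one discards the vanishing fraction of bonds near $\partial\Lambda$),
\[
\E[Q_\Lambda(\sigma(J+t\e),\sigma(J+t\e'))]\ \ge\ (1-2\rho(t))^2\,\PP(A).
\]
Choosing $t_0(\delta)$ so that $(1-2\rho(t))^2>1-\delta$ for $t<t_0(\delta)$ then yields the theorem, provided $\PP(A)>0$.

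For this feature, $\PP(A)>0$ (and $A$ is automatically $\Lambda$-independent) whenever the coupling law is spread out enough for one bond to beat its $2d-1$ neighbours by a fixed margin with positive probability — e.g.\ any unbounded distribution, or any non-degenerate distribution in $d=1$. The main obstacle is that single-bond domination is useless for a bounded, genuinely frustrated distribution in $d\ge2$, where no bond can dominate that many others; there one needs a structurally different mechanism, which is presumably why several proofs are offered. Each replaces $A$ by another local event that still robustly forces $e_0$ and still has positive, $\Lambda$-uniform probability — for instance a "moat" of weak bonds enclosing $e_0$ (available when $0$ is in the support), a locally ferromagnetic pattern together with an energy/contour estimate bounding the influence of far-off atypical couplings, or a rigid finite sub-configuration that decouples a neighbourhood of $e_0$. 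Exhibiting such a feature valid for \emph{every} continuous coupling law, and in particular proving the robustness estimate uniformly in $\Lambda$ in the frustrated case, is the delicate step; the remainder is just the conditioning identity above and linearity of expectation.
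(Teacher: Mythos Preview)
Your argument is correct and follows essentially the same route as the paper's first proof (Section~\ref{sect: ss}): the conditioning identity $\E[\sigma_e(J+t\e)\sigma_e(J+t\e')\mid J]=\tau_e(J)^2\ge 0$ is exactly the mechanism the paper isolates in Section~\ref{sect: method}, and your super-satisfied event $A=\{|J_{e_0}|>m+\sum_{e'\ni x,\,e'\neq e_0}|J_{e'}|\}$ is the paper's $A_e$ with an explicit margin built in (the paper instead uses that $A_e$ is open and argues $\PP(A_e\cap(A_e-t\e)\cap(A_e-t\e'))\to\PP(A_e)$). Your $\tau_e^2$ bookkeeping is arguably cleaner than the paper's $2\PP(\cdot)-\PP(A_e)$ rewriting, but the content is identical.

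The one point to flag is scope. The theorem as stated is for \emph{any} continuous coupling law, and you correctly note that single-bond domination can fail (e.g.\ $J$ supported on $[a,b]$ with $0<(2d-1)a>b$). You sketch the right escape hatch --- ``a locally ferromagnetic pattern together with an energy/contour estimate'' --- but do not carry it out. The paper does this in Section~\ref{sect: swamp}: on the event $A_v$ that all couplings in a box $B(v;M)$ lie in a fixed interval $I\not\ni 0$, one compares the ground state to the (anti)ferromagnetic configuration and uses that the boundary cost $O(M^{d-1})$ cannot overturn a volume gain $\Omega(M^d)$, forcing at least $3/4$ of the edges in $B(v;M)^*$ to be satisfied. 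This yields a positive density of stable edges with no assumption on the support. So your proposal is a complete proof under the extra hypothesis that $\PP(A)>0$ for the super-satisfied event (covering $0$ in the support or unbounded $J$), but leaves the general case at the level of a sketch; the missing ingredient is precisely Proposition~\ref{prop:energyscale}.
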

The theorem was first proved by Chatterjee \cite{chatterjee_2014} in the case of Gaussian couplings with an explicit decay in $t$.
Namely, if $J(t)$ and $J'(t)$ are two Ornstein-Uhlenbeck processes both starting at $J(0)$ then evolving independently, he proved for some $c>0$ that
\begin{equation}
\label{eqn: chat}
\E[Q_\Lambda(\sigma(J(t)),\sigma(J'(t)))]\geq \frac{c}{4d^2} e^{-t/(4d^2 c)}\ .
\end{equation}
The result is to be compared to the Sherrington-Kirkpatrick model on the complete graph with Gaussian coupling for which the average overlap goes to $0$ as $\Lambda\to \Z^d$ for any fixed $t$ \cite{chatterjee_2014} (the proof there is given at positive temperature, but the result is expected to hold at zero temperature as well).
The proof of \eqref{eqn: chat} is done in two steps. First, it is shown that the variance of the ground state energy is of the order of $|\Lambda|$. Then, the bound on the overlap follows from a relation between the variance and the overlap that is essentially a consequence of Gaussian integration by parts. 
Similar results for the model with external field were proved with different methods by Chen in \cite{chen_SK},  and for the spherical version of the model by Chen \& Sen in \cite{chen_sen_spherical}.

The main motivation of the present paper is to pinpoint direct causes of absence of disorder chaos in finite dimension, namely the presence of a positive density of coupling features that stabilize the ground state. 
We provide three proofs of Theorem \ref{thm: main}. 
The two proofs in Section \ref{sect: ss} and \ref{sect: swamp} rely on the presence of strong ferromagnetic couplings. The one in Section \ref{sect: ss} is simpler, but uses the assumption that $0$ is in the support. The proof in Section \ref{sect: swamp} relies on no other assumption than the continuity of the distribution.

Section \ref{sect: droplet} presents a different approach based on controlling the influence on the ground state of the coupling at a given edge. More precisely, we look at the critical droplet at an edge $e$, i.e., the set of vertices that flips when a coupling at $e$ is sent to either $+\infty$ or $-\infty$. 
It is known, see for example \cite{ANS_2019}, that the size of the critical droplet is intimately related to the number of ground states in the infinite-volume limit. This is still an important open question to be resolved related to the existence and the nature of the spin glass phase transition in finite dimension. 
The situation is more tractable for the model on trees, see \cite{baumler}, and on the half-plane, see \cite{ADNS_2010, arguin_damron}.
We expect that, at least for $d=2$, the critical droplets of all edges have finite size (uniformly in $\Lambda$), in which case a stronger version of Theorem \ref{thm: main} should hold:
\begin{conj}
Consider the Hamiltonian \eqref{eqn: Hamilton} at $d=2$. For any $\delta>0$ there exists $A=A(\delta)$ and $t_0=t_0(\delta)$ (independent of $\Lambda$) such that $\PP(A)>1-\delta$ uniformly in $\Lambda$, and
$$
\text{on $A$, }\ Q_\Lambda(\sigma(J+t\e),\sigma(J+t\e'))>1-\delta \qquad \text{for all $t\leq t_0$.}
$$
\end{conj}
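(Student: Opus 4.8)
The plan is to derive the conjecture from the (still conjectural) finite-droplet picture of Section \ref{sect: droplet} in $d=2$, used in the following quantitative form: there is a function $L\mapsto\rho(L)$ with $\rho(L)\to0$, independent of $\Lambda$, such that for every box $\Lambda$ and every edge $e$,
\[
\PP\Big(\exists\ \text{connected }\mathcal B,\ e\in\partial\mathcal B,\ \operatorname{diam}\mathcal B>L,\ 0<\!\!\!\sum_{\{x,y\}\in\partial\mathcal B}\!\!\!J_{xy}\sigma_x(J)\sigma_y(J)\le 1\Big)\le\rho(L).
\]
That is, off an event of probability $\le\rho(L)$, the bond at $e$ cannot be flipped by a bounded perturbation of the couplings on the boundary of a region of diameter $>L$. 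This is precisely the content one expects of uniformly finite critical droplets — by their variational characterization the cheapest region whose flip disaligns a given bond is a piece of a critical droplet — but turning ``finite droplets'' into such a $\Lambda$-uniform statement is the real work.

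The remainder is a deterministic bound followed by a first-moment estimate. Write $\sigma_e(J):=\sigma_x(J)\sigma_y(J)$ for $e=\{x,y\}$. Since an edge on which both $\sigma(J+t\e)$ and $\sigma(J+t\e')$ agree with $\sigma(J)$ contributes $+1$ to the overlap sum while every other edge contributes at least $-1$,
\[
Q_\Lambda\big(\sigma(J+t\e),\sigma(J+t\e')\big)\ \ge\ 1-\frac{2}{|\Lambda^*|}\,\#\big\{e\in\Lambda^*:\ \sigma_e(J+t\e)\neq\sigma_e(J)\ \text{or}\ \sigma_e(J+t\e')\neq\sigma_e(J)\big\}.
\]
Next, applying the ground-state characterization \eqref{eqn: GS prop} to $\sigma(J)$ and to $\sigma(J+t\e)$ at a connected component $\mathcal B$ of $\{x:\sigma_x(J+t\e)\neq\sigma_x(J)\}$ — and using that the far endpoint of every $\{x,y\}\in\partial\mathcal B$ is unflipped, so that $\sigma_x(J+t\e)\sigma_y(J+t\e)=-\sigma_x(J)\sigma_y(J)$ there — gives the local witness
\[
0<\sum_{\{x,y\}\in\partial\mathcal B}\!\!\!J_{xy}\sigma_x(J)\sigma_y(J)\ \le\ -\,t\!\!\!\sum_{\{x,y\}\in\partial\mathcal B}\!\!\!\e_{xy}\sigma_x(J)\sigma_y(J)\ \le\ t\!\!\!\sum_{\{x,y\}\in\partial\mathcal B}\!\!\!|\e_{xy}|.
\]
The left-hand side does not involve $t$, so the union over $t\le t_0$ of the events $\{\sigma_e(J+t\e)\neq\sigma_e(J)\}$ lies in
\[
\mathcal G_{t_0}(e):=\Big\{\exists\ \text{connected }\mathcal B,\ e\in\partial\mathcal B,\ 0<\!\!\!\sum_{\{x,y\}\in\partial\mathcal B}\!\!\!J_{xy}\sigma_x(J)\sigma_y(J)\ \le\ t_0\!\!\!\sum_{\{x,y\}\in\partial\mathcal B}\!\!\!|\e_{xy}|\Big\}.
\]
Letting $\mathcal G'_{t_0}(e)$ be the analogue with $\e'$, the event $A:=\big\{\#\{e:\ \mathcal G_{t_0}(e)\ \text{or}\ \mathcal G'_{t_0}(e)\ \text{occurs}\}<\tfrac{\delta}{2}|\Lambda^*|\big\}$ depends on $t_0$ but not on $t$, and on $A$ the first display gives $Q_\Lambda>1-\delta$ for every $t\le t_0$. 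By Markov's inequality, $\PP(A)>1-\delta$ follows as soon as $\sup_{\Lambda,e}\PP(\mathcal G_{t_0}(e))<\delta^2/4$.

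Thus it suffices to make $\sup_{\Lambda,e}\PP(\mathcal G_{t_0}(e))$ small by taking $t_0$ small, and this is where the droplet input enters. Split $\mathcal G_{t_0}(e)$ according to whether a witnessing $\mathcal B$ has $\operatorname{diam}\mathcal B>L$ or $\operatorname{diam}\mathcal B\le L$. On the first part, $\sum_{\{x,y\}\in\partial\mathcal B}|\e_{xy}|$ is of order $|\partial\mathcal B|$, so for $t_0$ small the flip energy is $\le1$ while $\operatorname{diam}\mathcal B>L$, an event controlled by $\rho(L)$; fix $L=L(\delta)$ with $\rho(L)<\delta^2/8$. On the second part there are only $2^{O(L^2)}$ connected $\mathcal B$ with $e\in\partial\mathcal B$ and $\operatorname{diam}\mathcal B\le L$, and for each, conditioning on $\e$ and on the spin products $(\sigma_x(J)\sigma_y(J))_{\{x,y\}\in\partial\mathcal B}$ leaves $\sum_{\{x,y\}\in\partial\mathcal B}J_{xy}\sigma_x(J)\sigma_y(J)$ a non-degenerate combination of finitely many couplings with a continuous conditional law, so the chance that it lies in $(0,\,t_0\sum_{\{x,y\}\in\partial\mathcal B}|\e_{xy}|]$ tends to $0$ as $t_0\to0$. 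Here the uniformity in $\Lambda$ again rests on the droplet picture, in the form that off a probability-$\rho(L)$ event those spin products depend only on couplings in a bounded neighborhood of $e$, making their joint law with the nearby $\e$'s $\Lambda$-independent in the bulk; the $O(L|\Lambda^*|^{1/2})=o(|\Lambda^*|)$ edges within distance $L$ of $\partial\Lambda$ are negligible and can be discarded. Choosing $t_0=t_0(\delta)$ small finishes the argument.

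The one genuine obstacle is the droplet-theoretic input itself. Uniform-in-$\Lambda$ finiteness of critical droplets in $d=2$ is open — morally it is equivalent to uniqueness of the infinite-volume ground-state pair, cf.\ \cite{ANS_2019} — and even granting it one must upgrade it to the two $\Lambda$-uniform statements used above: that large regions of small flip energy near a fixed edge are rare, and that bond values near a fixed edge are, off a small-probability event, determined by a bounded window of couplings. These are the natural quantitative refinements of the heuristic, but making them uniform in the volume — in particular ruling out ``cascades'' whereby far-away couplings influence a bond through a chain of individually small droplets — is exactly the difficulty. Everything else (the overlap inequality, the extremal comparison, the Markov bookkeeping) is routine.
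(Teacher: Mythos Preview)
The statement you are addressing is a \emph{Conjecture} in the paper, not a theorem: the authors do not prove it, nor do they offer a conditional proof. They state it as a strengthening of Theorem~\ref{thm: main} that they expect to hold in $d=2$ if critical droplets are uniformly finite, and remark that its truth would have consequences for ground-state uniqueness via an Aizenman--Wehr type argument. There is therefore no ``paper's own proof'' to compare against; your proposal is an attempt to sketch a conditional derivation that the paper simply does not undertake.

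As a conditional sketch, your outline is reasonable and you are candid about its main dependence on open droplet-theoretic input. One point that deserves more care even at the heuristic level: in the ``large diameter'' branch of your dichotomy you assert that for $t_0$ small the flip energy is $\le 1$, so as to invoke your hypothesis with threshold $1$. But the inequality you derived is $\sum_{\partial\mathcal B} J_{xy}\sigma_x\sigma_y \le t_0 \sum_{\partial\mathcal B}|\e_{xy}|$, and for $\mathcal B$ of large diameter the right-hand side can be large regardless of how small $t_0$ is, since $|\partial\mathcal B|$ grows with the diameter. So a large $\mathcal B$ with flip energy between $1$ and $t_0\sum|\e_{xy}|$ is not ruled out by your stated droplet hypothesis. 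You would need either a scale-adapted version of the hypothesis (e.g., that the flip energy of regions of diameter $>L$ is typically at least of order $L$, or at least grows faster than $t_0$ times the boundary size) or a separate argument controlling the $\e$-sum on large boundaries. This is consistent with your own closing caveat that ``ruling out cascades'' is exactly the difficulty, but it means the large-$\mathcal B$ branch is not actually closed by the hypothesis as you formulated it.
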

In words, as the perturbation is turned on, all but a set of vertices of small density remain unchanged. 
If true, then this implies that the variance of the difference of ground state energies goes like the volume of $\Lambda$, see Theorem 1.5 in \cite{ANS_2019}.
This would likely give an approach to prove uniqueness of the ground state in the infinite volume by implementing a strategy similar to the one of Aizenman \& Wehr for the random field Ising model \cite{aizenman-wehr_1990}. 
Another interesting result in $d=2$ that may be relevant to the nature of the ground states is that the satisfied edges, where $\sigma_x\sigma_y=\text{sgn }J_{xy}$, do not percolate in an infinite-volume ground state as shown by Berger \& Tessler in \cite{berger_tessler}.

Our results extend to positive temperature, where $\sigma$ is no longer the ground state, but rather independent random configurations $\sigma, \sigma'$. The configuration $\sigma$ is sampled with probability proportional to the Gibbs weight $e^{-\beta H_{\Lambda, J + t\varepsilon}(\sigma)}$ for fixed realizations of $J$ and $\varepsilon$, and $\sigma'$ is chosen analogously (with $\varepsilon'$ replacing $\varepsilon$). The distribution of such a pair is denoted by $\langle \cdot \rangle_t$. Note that, when $t = 0$, the states $\sigma, \sigma'$ are independently sampled from the same distribution.

For simplicity, we prove the positive-temperature result only under the simplifying assumption that $0$ is in the support of $J$ (as in the zero-temperature proof of Section \ref{sect: ss}), though this assumption can be removed by techniques similar to those of Section \ref{sect: swamp}.
\begin{thm}[Positive Temperature]\label{thm: positive}
	Assume $0$ is in the support of $J$, and let $\langle \cdot \rangle_t$ be defined as above. For any $\delta > 0$ there exists $t_0 = t_0(\delta)$ such that
	\[\langle Q_\Lambda(\sigma,\sigma') \rangle_t >(1-\delta) \delta' \PP(A)\\ , \text{ for any $t<t_0(\delta)$},\]
	where $A$ is some explicit event with $\PP(A) > 0$ uniformly in $\Lambda$ and $\delta' > 0$ is an explicit constant depending on the distribution of $J$.
	
\end{thm}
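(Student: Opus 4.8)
The plan is to run the zero-temperature argument of Section~\ref{sect: ss} with the deterministic statement ``the edge is frozen in the ground state'' replaced by a quantitative lower bound on a finite-volume Gibbs two-point function. As in Section~\ref{sect: ss}, fix an edge $e_0=\{x_0,y_0\}$ and let $A$ be the event that $|J_{e_0}|\ge M$ while every coupling incident to $x_0$ or $y_0$ other than $J_{e_0}$ has absolute value $<m$, where $M,m>0$ are chosen so that $m_0:=M-(2d-1)m>0$. Since $0$ lies in the support of $J$, continuity of the distribution gives $\PP(A)>0$, with $\PP(A)$ independent of $\Lambda$; and since the couplings are i.i.d.\ on all of $(\Lambda\cup\partial\Lambda)^*$, the analogous local event $A_e$ at an arbitrary $e\in\Lambda^*$ satisfies $\PP(A_e)=\PP(A)$ for every such $e$ (so there is no boundary correction).

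First I would record the conditional-independence identity that makes every edge contribute nonnegatively. Given $(J,\e,\e')$ the configurations $\sigma,\sigma'$ are independent Gibbs samples for $H_{\Lambda,J+t\e}$ and $H_{\Lambda,J+t\e'}$, and $\e,\e'$ are i.i.d.\ given $J$; hence for each $e=\{x,y\}\in\Lambda^*$,
\[
\langle\sigma_x\sigma_y\sigma'_x\sigma'_y\rangle_t=\E_J\big[g_{xy}(J)^2\big],\qquad g_{xy}(J):=\E_\e\big[\langle\sigma_x\sigma_y\rangle_{\beta,\,J+t\e}\big]\in[-1,1],
\]
where $\langle\cdot\rangle_{\beta,J'}$ is the Gibbs expectation for $H_{\Lambda,J'}$ at inverse temperature $\beta$. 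Summing over edges,
\[
\langle Q_\Lambda(\sigma,\sigma')\rangle_t=\frac1{|\Lambda^*|}\sum_{e=\{x,y\}\in\Lambda^*}\E_J\big[g_{xy}(J)^2\big]\ \ge\ \frac1{|\Lambda^*|}\sum_{e}\E_J\big[g_{xy}(J)^2\,\mathbf{1}_{A_e}\big],
\]
so it is enough to bound $g_{xy}$ from below on $A_e$, uniformly in the remaining couplings.

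The core estimate is the positive-temperature stabilization bound. Work on $A_e$ for $e=\{x,y\}$ and assume $\sign J_e=+1$ (the other sign is symmetric). If $\e$ is such that $t$ times the largest $|\e_{uv}|$ over couplings incident to $x$ or $y$ is at most $c\,m_0$ for a suitable dimensional constant $c>0$, then in $H_{\Lambda,J+t\e}$ the spin flip at $x$ maps any configuration with $\sigma_x\sigma_y=-1$ to one with $\sigma_x\sigma_y=+1$ and lowers the energy by at least $m_0$; this is a bijection between the two sets, so $\PP_{\beta,J+t\e}(\sigma_x\sigma_y=-1)\le e^{-\beta m_0}$ and hence $\langle\sigma_x\sigma_y\rangle_{\beta,J+t\e}\ge1-2e^{-\beta m_0}$. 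Since $\langle\sigma_x\sigma_y\rangle\in[-1,1]$ for every $\e$ and the probability that $\e$ fails the above smallness condition tends to $0$ as $t\to0$, we get $g_{xy}(J)\ge1-2e^{-\beta m_0}-\eta(t)$ on $A_e$ with $\eta(t)\to0$. Now fix $M$ large enough that $\delta':=(1-2e^{-\beta m_0})^2\in(0,1]$; then for $t<t_0(\delta)$ with $t_0$ small one has $g_{xy}(J)^2\ge(1-\delta)\delta'$ on $A_e$, and feeding this into the last display together with $\frac1{|\Lambda^*|}\sum_e\PP(A_e)=\PP(A)$ gives $\langle Q_\Lambda(\sigma,\sigma')\rangle_t\ge(1-\delta)\delta'\PP(A)$, as claimed.

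The main obstacle is the familiar one from the zero-temperature proofs: the perturbations $\e$ are unbounded, so an atypically large $\e$ can destroy the dominance of the strong edge and the bound $\langle\sigma_x\sigma_y\rangle\ge1-2e^{-\beta m_0}$ holds only off an $\e$-event of small but positive probability $\eta(t)$. Keeping $\eta(t)\to0$ is exactly what forces the factor $(1-\delta)$ and the threshold $t_0(\delta)$, and is why one cannot reach $\delta=0$. Everything else is routine: the identity in the second paragraph is just the independence of $\sigma$ and $\sigma'$ given the disorder, and the stabilization bound is a one-line Peierls-type pairing. The hypothesis that $0$ is in the support of $J$ is used only to ensure $\PP(A)>0$; as in Section~\ref{sect: swamp} it can be replaced by a strong edge surrounded by a ``swamping'' moat, at the price of more bookkeeping.
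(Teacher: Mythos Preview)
Your argument is essentially the paper's: define a super-satisfied event $A_e$ depending on finitely many couplings, use conditional independence of $\e,\e'$ given $J$ to make the $A_e^c$ contribution nonnegative, and on $A_e$ run a single-spin Peierls pairing via the flip $\varphi$ at $x$ to lower-bound the Gibbs expectation of $\sigma_e$. The paper intersects with the translates $A_e\cap(A_e-t\e)\cap(A_e-t\e')$, whereas you absorb the perturbation into a bad-$\e$ event of probability $\eta(t)\to0$; these are equivalent bookkeeping choices.

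One small but genuine slip: your Peierls bound $\langle\sigma_x\sigma_y\rangle\ge 1-2e^{-\beta m_0}$ is only positive when $\beta m_0>\log 2$, and you then ``fix $M$ large enough'' to force this. If the support of $J$ is bounded (which the hypothesis ``$0$ is in the support'' allows), no such $M$ may exist for small $\beta$, and the proof as written fails there. The pairing actually gives $\PP_{\beta}(\sigma_e=-1)\le e^{-\beta m_0}\PP_{\beta}(\sigma_e=+1)$, hence $\langle\sigma_e\rangle\ge\tanh(\beta m_0/2)>0$ for every $\beta>0$ and $m_0>0$; with this sharper constant (which is what the paper effectively uses, taking $\delta'$ proportional to $(1-e^{-\beta a})^2$), no largeness of $M$ is needed and your proof goes through for all $\beta$.
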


\noindent {\bf Notation.}
We write $B(x,n)$ for the set of vertices whose $\ell^\infty$-distance to $v$ is less or equal to $n$. 
In other words, $B(x,n)$ is a box centered at $v$ of sidelength $2n+1$. 
We also write $\|\cdot\|_1$ for the $\ell^1$-norm on $\Z^d$. 
For the sake of conciseness, we will often use the following notation for the product $\sigma_x\sigma_y$:
$$
\text{for $e=\{x,y\}$, }\sigma_e=\sigma_x\sigma_y\ .
$$

\subsection{Method of Proof}
\label{sect: method}
The three proofs of the theorem are based on the following idea.
For a given edge $e$, we find a subset $A_e\subseteq \R^{\lpl^*}$ of realizations of couplings such that
\begin{itemize}
\item $A_e$ is open and depends on a finite number of couplings uniformly in $\Lambda$ and $e$ (i.e., it is an open cylinder set);
\item $\sigma_e(J)$ is constant on $A_e$.
\end{itemize}
To prove the theorem, we first write the overlap in terms of the event $A_e$
\begin{equation}
\begin{aligned}
\E[Q_\Lambda(\sigma(J+t\e),\sigma(J+t\e'))]&=\frac{1}{|\Lambda^*|}\sum_{e\in \Lambda^*} \E[\sigma_e(J+t\e)\sigma_e(J+t\e'));  J \in A_e]\\
&+\frac{1}{|\Lambda^*|}\sum_{e\in \Lambda^*} \E[\sigma_e(J+t\e)\sigma_e(J+t\e')); J \in A_e^c]\ .
\end{aligned}
\end{equation}
We observe that, by conditioning on $J$, the independence of the perturbations $\e, \e'$ yields
$$
\E[\sigma_e(J+t\e)\sigma_e(J+t\e'));  J \in A_e^c]
=\E[(\E[\sigma_e(J+t\e)|J])^2\ ; J \in A_e^c]\geq 0\ .
$$
Therefore, this gives the lower bound
$$
\begin{aligned}
\E[Q_\Lambda(\sigma(J+t\e),\sigma(J+t\e'))]&\geq \frac{1}{|\Lambda^*|}\sum_{e\in \Lambda^*} \E[\sigma_e(J+t\e')\sigma_e(J+t\e'));  J \in A_e]\\
&=\frac{1}{|\Lambda^*|}\sum_{e\in \Lambda^*} \Big\{2\PP(\{\sigma_e(J+t\e)=\sigma_e(J+t\e')\} \cap  \{ J \in A_e\})- \PP(A_e)\Big\}\ .
\end{aligned}
$$

Since by assumption the ground state at $e$ is constant on $A_e$, the summand is larger than
$$
2\PP(  J \in \left[A_e\cap (A_e-t\e) \cap (A_e-t\e')\right])- \PP( A_e)\ ,
$$
where $A_e-t\e$ stands for the translate of $A_e$ by $t\e$.
Moreover, since we assume that $A_e$ is open and depends only on a finite number of couplings uniformly in $\Lambda$, for any $0<\delta<1$, there exists $t_0(\delta)$ independent of $\Lambda$ such that
$$
\PP( J \in \left[ A_e\cap (A_e-t\e) \cap (A_e-t\e') \right])>(1-\delta)\PP( A_e) \text{ for $t<t_0(\delta)$}\ .
$$
The conclusion of the theorem follows from this. 

\section{Proof of Theorem \ref{thm: main} using (anti-)ferromagnetic edges}
\subsection{Proof under assumptions on the support of $J$}
\label{sect: ss}
We first suppose that a neighborhood of $0$ is included in the support of the distribution of $J$.

Let $e=\{x,y\}$ be an edge. We write $A_{e}$ for the set of realizations of couplings
$$
A_e=\left\{ J: \ J_{xy}>\sum_{i=1}^{2d-1}|J_{xz_i}|\right\}
$$
where $z_i$, $i=1,\dots, 2d-1$, stands for the vertices neighboring $x$ other than $y$. 
Clearly, $A_e$ is an open set of $ \R^{\lpl^*}$ that depends on only finite number of couplings. Moreover, we have $\PP(A_e)>0$ uniformly in $\Lambda$ if $0$ is in the support of the distribution of $J$. Such edges were referred to as {\it super-satisfied} in \cite{newman-stein_2001, ADNS_2010}. 

\begin{proof}[Proof of Theorem \ref{thm: main}]
In view of Section \ref{sect: method}, it remains to show that the ground state is constant on $A_e$. 
In fact, it must be that $\sigma_e=+1$, otherwise the ground state property \eqref{eqn: GS prop} is violated for $B=\{x\}$.

\end{proof}

\subsection{Proof under no assumptions on $J$}
\label{sect: swamp}
We present a modified construction to prove Theorem \ref{thm: main} without assumptions on the support of $J$. We are no longer able to force the satisfaction status of a given $e$ in the ground state. Hence, we now construct an event $A_v$ for a fixed vertex $v$ on which, for suitably chosen $M$, many edges of $B(v; M)^*$ have stable satisfaction status. In other words, for a fixed vertex $v$, we show $\sigma_f(J+t\e) = \sigma_f(J)$ for ``most'' $f$ near $v$. The argument makes clear the role of the finite-dimensionality in the absence of disorder chaos: perturbation-induced changes in boundary conditions change the energy by at most the order of the boundary size, but this cannot flip order of the volume number edges.

Fix $M$ large and constant (to be chosen precisely). Choose some interval $I = (a, b)$ such that $\PP(J_{xy} \in I) > 0$ and such that $0 \notin [a,b]$ ; we write $\sign(I)$ for the common sign of the elements of $I$.  For $v$ such that $B(v;M) \subset \Lambda$, we define $A_v$ as follows:
\begin{equation*}
A_v = \left\{ J:\ J_f \in I, \text{for all } f \in B(v;M)^* \cup \partial B(v;M)\right\}\ .
\end{equation*}
Note that $A_v$ is an open subset of $ \R^{\lpl^*}$ that depends on a finite number of couplings.
On the event $A_v$, the contribution to the total energy from bonds within $B(v; M)^*$ is minimized by homogeneous ferromagnetic or antiferromagnetic configurations (depending on  the value of $\sign(I)$), which satisfy all bonds of $B(v;M)$. The contribution to the energy from bonds in $\partial B(v;M)$ is only of order $M^{d-1}$, and so it will follow that the restriction of $\sigma(J)$ to $B(v;M)$ must still satisfy a high density of bonds of $B(v;M)$. This is the content of the following proposition. 

\begin{prop}\label{prop:energyscale}
	Suppose $M$ is large enough that $(|a| \wedge |b|) M^d  > 2d(2M + 1)^{d-1} (|a| \vee |b|)$, and let $v \in \Lambda$ such that $B(v;M) \subseteq \Lambda$. Then on the event $A_v$, we have
	\[ \left| \left\{ f \in B(v;M)^*: J_f \sigma_f(J) > 0  \right\}\right| \geq (3/4) |B(v;M)^*|\ . \]  
\end{prop}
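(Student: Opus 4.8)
The plan is to compare the ground state $\sigma = \sigma(J)$, on the event $A_v$, against a reference configuration $\tau$ designed to satisfy \emph{every} bond inside $B(v;M)$. Concretely, I would let $\tau$ agree with $\sigma$ on $\Lambda\setminus B(v;M)$ (so $\tau$ keeps the same boundary condition), and on $B(v;M)$ let $\tau$ be the configuration that satisfies all bonds of $B(v;M)^*$. On $A_v$ every coupling of $B(v;M)^*$ has the common sign $\sign(I)$, and since $\Z^d$ is bipartite such a $\tau$ exists and is unique up to a global flip of the spins of $B(v;M)$: it is the homogeneous configuration when $\sign(I)=+1$ and the alternating one when $\sign(I)=-1$. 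Because $B(v;M)\subseteq\Lambda$, the spins of $B(v;M)$ are free, so $\tau$ is an admissible competitor and ground-state optimality gives $H_{\Lambda,J}(\tau)\geq H_{\Lambda,J}(\sigma)$.

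The heart of the argument is then to expand $0\leq H_{\Lambda,J}(\tau)-H_{\Lambda,J}(\sigma)=\sum_f J_f(\sigma_f-\tau_f)$, the sum running over the bonds $f$ of $\Lambda\cup\partial\Lambda$, and to split it into three groups. Bonds with both endpoints outside $B(v;M)$ drop out, since $\tau=\sigma$ there. For $f\in B(v;M)^*$ one has $\tau_f=\sign(J_f)$, so the summand vanishes if $\sigma$ satisfies $f$ and equals $-2|J_f|\leq -2(|a|\wedge|b|)$ otherwise; hence this group contributes at most $-2(|a|\wedge|b|)\,N$, where $N:=|\{f\in B(v;M)^*:\ J_f\sigma_f(J)<0\}|$ is the number of $\sigma$-unsatisfied internal bonds. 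For $f\in\partial B(v;M)$ the summand is at least $-2|J_f|\geq -2(|a|\vee|b|)$ — here one uses that $A_v$ also constrains $J_f$ on $\partial B(v;M)$ — and there are exactly $|\partial B(v;M)|=2d(2M+1)^{d-1}$ such bonds, one leaving each of the $(2M+1)^{d-1}$ sites on each of the $2d$ faces of the box. Combining the three groups,
\[
0 \;\leq\; -2(|a|\wedge|b|)\,N + 2(|a|\vee|b|)\cdot 2d(2M+1)^{d-1},
\qquad\text{so}\qquad
N \;\leq\; \frac{|a|\vee|b|}{|a|\wedge|b|}\cdot 2d(2M+1)^{d-1}.
\]

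To conclude, I would use the elementary count $|B(v;M)^*|=2dM(2M+1)^{d-1}$, which rewrites the bound as $N\leq \tfrac1M\cdot\tfrac{|a|\vee|b|}{|a|\wedge|b|}\,|B(v;M)^*|$; the standing hypothesis $(|a|\wedge|b|)M^d>2d(2M+1)^{d-1}(|a|\vee|b|)$ is exactly what forces $N<M^d$, while the same count gives $|B(v;M)^*|=2dM(2M+1)^{d-1}\geq d\,2^d M^d\geq 4M^d$ for $d\geq 2$, so $N<M^d\leq\tfrac14|B(v;M)^*|$ and therefore at least $(3/4)|B(v;M)^*|$ internal bonds are satisfied. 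I do not expect a real obstacle here; the only points needing a little care are the appeal to bipartiteness — which is what makes the fully satisfying $\tau$ available uniformly in both the ferromagnetic and antiferromagnetic cases — and the boundary-versus-volume count $|\partial B(v;M)|=\tfrac1M|B(v;M)^*|$. That last identity is really the content of the proposition: it expresses why finite dimensionality suppresses disorder chaos, since changing the couplings throughout $B(v;M)^*\cup\partial B(v;M)$ can shift the energy only by an amount of surface order $M^{d-1}$, which is far too small to reverse a positive fraction of the order-$M^d$ internal bonds.
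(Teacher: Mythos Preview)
Your argument is correct and is essentially the paper's own proof: both compare $\sigma(J)$ against the same reference configuration (homogeneous or alternating on $B(v;M)$, equal to $\sigma$ outside), split the energy difference into interior and $\partial B(v;M)$ contributions, and balance the $O(M^d)$ interior gain against the $O(M^{d-1})$ boundary loss. The only cosmetic difference is that you derive the bound $N<M^d$ directly while the paper phrases it as a contradiction from $N\geq M^d$; your edge count $|B(v;M)^*|=2dM(2M+1)^{d-1}$ and the explicit restriction to $d\geq 2$ are in fact more careful than the paper's parenthetical remark.
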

\begin{proof}
	Suppose for a contradiction that the above cardinality bound were false; in particular, $J_f \sigma_f(J) < 0$ for at least $M^d$ edges $f \in B(v;M)^*$ (note that $|B(v;M)^*| = d (M-1)M^{d-1}$). Define the modification $\overline \sigma$ of $\sigma(J)$ obtained by satisfying all bonds in $B(v;M)^*$ and leaving the configuration outside $B(v;M)$ unchanged:
	\[\overline \sigma_x = \begin{cases} \sigma_x(J)\ , \quad &x \notin B(v;M);\\
	(\sign(I))^{\|x\|_1}\ , \quad & x \in B(v;M).
	\end{cases} \]
	Since $\sigma(J)$ is the ground state, we have $H_{\Lambda, J}(\overline \sigma) - H_{\Lambda, J}(\sigma(J)) \geq 0$. Estimating this energy difference directly, we see
	\begin{align}
	H_{\Lambda, J}(\overline \sigma) - H_{\Lambda, J}(\sigma(J)) &= \left[\sum_{x, y \in B(v;M)}  + \sum_{\substack{x\in B(v;M) \\  y \in  \partial B(v;M)} } \right] J_{xy} (\overline \sigma_x \overline \sigma_y - \sigma_x(J) \sigma_y(J))\nonumber\\
	&\leq -2(|a| \wedge |b|) M^d  + 4d(2M + 1)^{d-1} (|a| \vee |b|)\ .\label{eq:enlowbd}
	\end{align}
	In estimating the first term above, we used the fact that on $A_v$ all edges in $B(v;M)^*$ are satisfied in $\overline \sigma$, but (by assumption) at least $M^d$ are unsatisfied in $\sigma(J)$. For the second term, we use the fact that $|\overline \sigma_f - \sigma(J)_f| \leq 2$ for each edge $f$.
\end{proof}

We now prove Theorem \ref{thm: main}.
\begin{proof}[Proof of Theorem \ref{thm: main}]
 Fix $M$ as in the statement of Proposition \ref{prop:energyscale}. We break $\Lambda$ up into blocks of sidelength $(2M+1)$ centered at vertices $v \in \Lambda$ (with some smaller boxes at the boundary if the sidelength of $\Lambda$ is not a multiple of $(2M+1)$ --- since there are $o(|\Lambda|)$ of these, we can disregard them in what follows). By Proposition \ref{prop:energyscale}, we have for all $t<t_0(\delta)$ for some $t_0(\delta)$ (independent of $\Lambda$),
\[\sum_{e \in B(v;M)^*}\PP(\{\sigma_e(J+t\e)=\sigma_e(J+t\e')\}\cap A_v) \geq (1-\delta)(3/4) |B(v;M)^*| \cdot\PP( A_v)\ . \]
The claim then follows similarly to Section \ref{sect: method}, since the number of boxes $B(v,M)$ is proportional to the size of $\Lambda$.
\end{proof}

\section{Proof of Theorem \ref{thm: main} using critical droplets}
\label{sect: droplet}
For a fixed edge $e=\{x,y\}$, a good measure of the sensitivity under perturbation of the ground state at $e$ is given by the set of vertices $\mathcal D_e=\mathcal D_e(J)$ containing either $x$ or $y$ with the lowest flip energy in the ground state $\sigma(J)$. In other words, 
$$
\sum_{b\in \partial \mathcal D_e} J_b\sigma_b(J)=\min_{\mathcal B:  e\in \partial \mathcal B }\Big\{\sum_{b\in \partial \mathcal B} J_b\sigma_b(J) \Big\}\ .
$$
The set $\mathcal D_e$ is referred to as the {\it critical droplet of the edge $e$}. More generally, we consider the spin configurations that minimize $H_{\Lambda, J}$ with a fixed configuration at $e$
$$
\sigma^{\pm, e}(J)=\argmin_{s\in \{-1,+1\}^{\Lambda}, s_e=\pm 1} H_{\Lambda, J}(s)\ .
$$
As for the ground state, the states $\sigma^{\pm, e}$ as functions of $J$ are well-defined on the open set $\R^{(\Lambda\cup \partial \Lambda)^*}\setminus \mathcal C$. 
Clearly, the ground state $\sigma$ is either $\sigma^{+, e}$ or $\sigma^{-, e}$. Moreover, the critical droplet $\mathcal D_e$ is exactly the set of vertices where $\sigma^{+,e}$ and $\sigma^{-,e}$ differ. Similarly as in \eqref{eqn: GS prop}, for any set of vertices $\mathcal B$ such that $e\notin \partial \mathcal B$, we must have 
\begin{equation}
\label{eqn: GS prop 2}
\sum_{b\in \partial \mathcal B}J_b\sigma^{\pm,e}_b(J)>0\ .
\end{equation}
The following elementary fact will be needed.
\begin{lem}
	\label{lem: droplet}
	Let $\mathcal D_e$ be the critical droplet of the edge $e=\{x,y\}$. Then $\mathcal D_e$ and $\mathcal D_e^c$ are a.s.~connected as subgraphs of $\Lambda$.
\end{lem}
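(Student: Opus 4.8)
The plan is to encode everything through the \emph{flip-energy functional} $\Phi(\mathcal B):=\sum_{b\in\partial\mathcal B}J_b\sigma_b(J)$ attached to the ground state $\sigma=\sigma(J)$, and to play two of its features against each other. By definition $\mathcal D_e$ minimizes $\Phi$ over all $\mathcal B$ with $e\in\partial\mathcal B$, i.e.\ over all $\mathcal B$ separating $x$ from $y$ (and a.s.\ all the objects below are well-defined, the ties lying in $\mathcal C$). On the other hand \eqref{eqn: GS prop} says $\Phi(\mathcal B)>0$ a.s.\ for \emph{every} nonempty $\mathcal B\subseteq\Lambda$, not only for the competitors of $\mathcal D_e$. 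The single combinatorial fact needed is that $\Phi$ is additive over non-adjacent pieces: if $\mathcal B=\mathcal B'\sqcup\mathcal B''$ with no edge of $\Lambda\cup\partial\Lambda$ between $\mathcal B'$ and $\mathcal B''$, then $\partial\mathcal B=\partial\mathcal B'\sqcup\partial\mathcal B''$ (each boundary edge to $\partial\Lambda$ being counted on whichever side carries its $\Lambda$-endpoint), hence $\Phi(\mathcal B)=\Phi(\mathcal B')+\Phi(\mathcal B'')$.

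For connectedness of $\mathcal D_e$: if $\mathcal D_e=\mathcal A_1\sqcup\mathcal A_2$ with no edge between the nonempty pieces, label so that the endpoint of $e$ inside $\mathcal D_e$ lies in $\mathcal A_1$. Then $e\in\partial\mathcal A_1$, so $\mathcal A_1$ is itself an admissible competitor, while additivity together with $\Phi(\mathcal A_2)>0$ gives $\Phi(\mathcal A_1)=\Phi(\mathcal D_e)-\Phi(\mathcal A_2)<\Phi(\mathcal D_e)$, contradicting minimality. This used only $\mathcal A_2\subseteq\Lambda$, so is unaffected by $\mathcal A_2$ meeting $\partial\Lambda$.

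For $\mathcal D_e^c$: if $\Lambda\setminus\mathcal D_e=\mathcal B_1\sqcup\mathcal B_2$ with no edge between the pieces, let $\mathcal B_1$ contain the endpoint of $e$ outside $\mathcal D_e$. Then $\mathcal D_e\cup\mathcal B_2$ still separates $x$ from $y$, so it too is an admissible competitor. If $\mathcal B_2$ has no boundary edges to $\partial\Lambda$, every edge of $\partial\mathcal B_2$ runs into $\mathcal D_e$, so $\partial(\mathcal D_e\cup\mathcal B_2)=\partial\mathcal D_e\setminus\partial\mathcal B_2$ and $\Phi(\mathcal D_e\cup\mathcal B_2)=\Phi(\mathcal D_e)-\Phi(\mathcal B_2)<\Phi(\mathcal D_e)$, again contradicting minimality. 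Thus no component of $\Lambda\setminus\mathcal D_e$ is enclosed by $\mathcal D_e$, which is the substance of the claim.

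The crux, and what I expect to be the main obstacle, is precisely this last case: when the cut-off component $\mathcal B_2$ does touch $\partial\Lambda$, absorbing it into $\mathcal D_e$ alters $\Phi$ by $-\Phi(\mathcal B_2)+2\sum_{b:\,\mathcal B_2\to\partial\Lambda}J_b\sigma_b$ rather than by $-\Phi(\mathcal B_2)$, and the extra boundary term need not be negative, so the energy comparison no longer bites. Such a component has to be disposed of structurally---it is not surrounded by $\mathcal D_e$, hence glued to the rest of the complement once connectedness is measured in a large enough ambient graph (the full lattice, or $\Lambda$ with wired boundary). Keeping the $\partial\Lambda$-edge bookkeeping in the additivity identity exactly straight, and fixing which ambient graph ``connected'' is taken in, is where the care is needed.
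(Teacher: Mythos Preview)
Your argument for the connectedness of $\mathcal D_e$ is correct and is essentially the paper's argument rephrased through the flip-energy minimization rather than through the constrained minimizers $\sigma^{\pm,e}$. The paper takes a stray component $\mathcal B$ with $e\notin\partial\mathcal B$, notes that \eqref{eqn: GS prop 2} forces both $\sum_{b\in\partial\mathcal B}J_b\sigma^{+,e}_b>0$ and $\sum_{b\in\partial\mathcal B}J_b\sigma^{-,e}_b>0$, and then claims these two sums have opposite sign, a contradiction. For a stray component of $\mathcal D_e$ this is clean: every $b\in\partial\mathcal B$ has its $\mathcal B$-endpoint in $\mathcal D_e$ (where $\sigma^{+,e}$ and $\sigma^{-,e}$ differ) and its other endpoint in $\mathcal D_e^c\cup\partial\Lambda$ (where they agree), so indeed $\sigma^{+,e}_b=-\sigma^{-,e}_b$ for every such $b$. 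Your additivity computation together with $\Phi(\mathcal A_2)>0$ is the same contradiction in different clothing.

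Your caution about $\mathcal D_e^c$ is well placed, and the obstacle you name is real rather than a bookkeeping nuisance. The paper's proof is in fact written out only for $\mathcal D_e^c$, and its ``opposite signs'' step fails exactly where your energy comparison does: if the stray component $\mathcal B\subseteq\mathcal D_e^c$ has edges to $\partial\Lambda$, those edges carry $\sigma^{+,e}_b=\sigma^{-,e}_b$ (both endpoints lie where the two configurations agree), so the two boundary sums need not be negatives of one another. Indeed the $\mathcal D_e^c$ half of the lemma is false as literally stated: in one dimension with $\Lambda=\{1,2,3\}$, $e=\{2,3\}$, and positive couplings with $J_{12}<\min(J_{01},J_{34})$, the ground state is all $+1$, $\mathcal D_e=\{2\}$, and $\mathcal D_e^c=\{1,3\}$ is disconnected in $\Lambda$. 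This does no damage downstream, since the only use of the lemma (in the proof of Proposition~\ref{prop: cage}) invokes just the connectedness of $\mathcal D_e$.
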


\begin{proof}
Without loss of generality, we take $x\in \mathcal D_e$. Suppose $\mathcal D_e^c$ is not connected. Then it has at least one connected component, say $\mathcal B$, that does not contain $y$ nor $x$. In particular, $\partial\mathcal B$ does not contain $e$. 
But by definition of the droplet, the energy of the boundary in $\sigma^{+,e}$ and $\sigma^{-.e}$ are of opposite signs. In particular, this implies
$$
\sum_{b\in \partial \mathcal B}J_b\sigma^{+,e}(J)<0 \text{ or }\sum_{b\in \partial \mathcal B}J_b\sigma^{-,e}(J)<0,
$$ 
thereby contradicting Equation \eqref{eqn: GS prop 2}.
\end{proof}

In the next section, we explicitly construct an event with positive probability on which the critical droplet is of size one. 
In particular, this implies that the ground state is constant on some subset $A_e$ of that event, thereby providing another proof of Theorem \ref{thm: main}
In Section \ref{sect: droplet}, we show how this argument can be generalized on the event that the droplet is of finite size (uniformly in $\Lambda$) with positive probability.

\subsection{A Critical Droplet of Size One}
\label{sect: dropsizeone}
We first describe a construction which shows that $\partial \mathcal{D}_e$ can be of order one (in fact, of cardinality exactly one) with nonvanishing probability, based on the presence of locally ferromagnetic regions. Roughly speaking, a local region of sufficiently ferromagnetic bonds encircling $e$ causes nearby spins to strongly prefer to align, preventing the droplet from propagating outside this region. 

The construction requires that the common distribution of the $J_{xy}$'s has a support which is not too concentrated, and so we work under the following assumption:
\begin{assn} \label{assn: support} There is an $r \geq 0$ such that both $\PP(|J_e| \leq r) > 0$ and $\PP(|J_e| > d\cdot 3^d r) > 0$.
\end{assn}
(This obviously holds if $0$ is in the support, for example.)
The above assumption allows us to show a particularly strong form of bounded droplet size, as in the following proposition. 
\begin{prop}\label{prop: cage}
	Suppose that the distribution of $J$'s satisfies Assumption \ref{assn: support}. 
	Let $e = \{x,y\} \in \Lambda^*$ be an edge such that both $x,y$ are at a distance at least $2$ from the boundary.
	There exists $c > 0$, uniform in $\Lambda$ and in the choice of $e$, such that $\PP\left( \mathcal{D}_e = \{x\} \right) \geq c$.
\end{prop}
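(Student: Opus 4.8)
The plan is to realise the event $\{\mathcal D_e=\{x\}\}$ as (a subevent of) an explicit \emph{ferromagnetic cage} event $A_e$, depending -- uniformly in $\Lambda$ and $e$ -- on only finitely many couplings, with $\PP(A_e)>0$ uniform, and to show $\mathcal D_e=\{x\}$ on $A_e$. Using Assumption \ref{assn: support}, fix $I_{\mathrm{sm}}=(-r,r)$ and an interval $I_{\mathrm{lg}}$ with $I_{\mathrm{lg}}\subseteq(d\,3^d r,\infty)$ or $I_{\mathrm{lg}}\subseteq(-\infty,-d\,3^d r)$, both of positive probability; I treat the first (ferromagnetic) case, the antiferromagnetic one being identical after replacing ``constant on $B(x;1)\setminus\{x\}$'' by ``of the bipartite form $v\mapsto\pm(-1)^{\|v\|_1}$'' throughout. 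Let $A_e$ be the event that $J_f\in I_{\mathrm{sm}}$ for the $2d$ edges $f$ incident to $x$, and $J_f\in I_{\mathrm{lg}}$ for every other edge $f\in B(x;1)^*$ and every edge $f$ of the edge-boundary $\partial B(x;1)$; call the latter edges \emph{cage edges}. As $A_e$ constrains at most $|B(x;1)^*|+|\partial B(x;1)|\le 4d\,3^{d-1}$ i.i.d.\ couplings, $\PP(A_e)\ge\bigl(\PP(J_e\in I_{\mathrm{sm}})\wedge\PP(J_e\in I_{\mathrm{lg}})\bigr)^{4d\,3^{d-1}}=:c>0$, uniformly.

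Now suppose $A_e$ holds; by Lemma \ref{lem: droplet}, $\mathcal D_e$ and $\mathcal D_e^c$ are connected, and exactly one of $x,y$ lies in $\mathcal D_e$. The first step is: if $\partial\mathcal D_e$ contains no cage edge of $B(x;1)^*$, then $\mathcal D_e=\{x\}$. Indeed, the cage edges inside $B(x;1)^*$ make $B(x;1)\setminus\{x\}$ connected, so their absence from $\partial\mathcal D_e$ forces $B(x;1)\setminus\{x\}$ to lie entirely inside $\mathcal D_e$ or entirely inside $\mathcal D_e^c$. If entirely in $\mathcal D_e^c$, then $x\in\mathcal D_e$ (since $y\in B(x;1)\setminus\{x\}$), and as every neighbour of $x$ lies in $B(x;1)\setminus\{x\}$, the vertex $x$ is isolated in $\mathcal D_e$; connectedness gives $\mathcal D_e=\{x\}$. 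If entirely in $\mathcal D_e$, then $y\in\mathcal D_e$, $x\in\mathcal D_e^c$ is isolated there, so $\mathcal D_e^c=\{x\}$, i.e.\ $\mathcal D_e=\Lambda\setminus\{x\}$; but $\partial(\Lambda\setminus\{x\})=\partial\{x\}\sqcup\{b:\,b\text{ joins }\Lambda\text{ to }\partial\Lambda\}$, so by the ground-state property \eqref{eqn: GS prop} for $\mathcal B=\Lambda$ the flip energy of $\Lambda\setminus\{x\}$ strictly exceeds that of $\{x\}$, contradicting that $\mathcal D_e$ minimizes flip energy. Hence this second case is impossible, which proves the step.

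It remains to rule out a cage edge $b_0=\{u,v\}\in B(x;1)^*$ in $\partial\mathcal D_e$, with $u,v\in B(x;1)\setminus\{x\}$, $u\in\mathcal D_e$, $v\notin\mathcal D_e$. Here $\{x\}$ is a competing separating set whose flip energy in the ground state is at most $2dr$ (by the smallness of the edges at $x$), so by minimality it suffices to show that a cage edge in $\partial\mathcal D_e$ forces the flip energy of $\mathcal D_e$ above $2dr$. I would do this by a purely local surgery on the ground state $\sigma(J)$: flip a cluster $\mathcal M\subseteq B(x;1)\setminus\{x\}$ chosen so that $b_0\in\partial\mathcal M$; by construction every edge leaving $\mathcal M$ is either one of the small edges at $x$ (total energy swing $\le 4dr$) or a ferromagnetic cage edge, and one arranges $\mathcal M$ so that the cage edges on $\partial\mathcal M$ are \emph{violated} by $\sigma(J)$ and hence healed by the flip, for a gain of at least $2d\,3^d r>4dr$ -- a strict decrease of $H_{\Lambda,J}$, impossible for a ground state. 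The main obstacle is exactly this last step: one must choose the cage -- its shape $B(x;1)$, the inclusion of the \emph{entire} edge-boundary $\partial B(x;1)$, and the threshold $d\,3^d r$ (so that the number of cage edges that can be disturbed never overwhelms the gain) -- so that the relevant cluster is genuinely confined to $B(x;1)\setminus\{x\}$ and meets only small or cage bonds, keeping the uncontrolled couplings outside $B(x;1)$ out of the comparison; the bipartite, frustration-free structure of the cage is what makes the count come out in our favour. Granting this, $\partial\mathcal D_e$ contains no cage edge of $B(x;1)^*$, so by the first step $\mathcal D_e=\{x\}$ on $A_e$, whence $\PP(\mathcal D_e=\{x\})\ge\PP(A_e)\ge c$.
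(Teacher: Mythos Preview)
Your construction of $A_e$ differs from the paper's in a way that breaks the argument, and the gap you flag as an ``obstacle'' (and then ``grant'') is precisely the point where this shows. You force the edges on the outer boundary $\partial B(x;1)$ to be \emph{large}; the paper does the opposite: it forces \emph{all} edges leaving $B(x;1)\setminus\{x\}$ --- both those to $x$ and those to vertices outside $B(x;1)$ --- to be \emph{small} ($|J|<r$), and only the edges with both endpoints in $B(x;1)\setminus\{x\}$ to be large ($|J|>d\cdot 3^d r$, with a common sign). With that choice the comparison is clean: for each of $\sigma^{+,e},\sigma^{-,e}$, compare to the configuration $\eta$ that agrees with it off $B(x;1)\setminus\{x\}$ and is constant (resp.\ bipartite) on $B(x;1)\setminus\{x\}$; any violated inner cage edge gains $>2d\cdot 3^d r$, while the total swing over the outgoing edges is at most $2r$ times their number, which is $\le d\cdot 3^d$. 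Hence both $\sigma^{\pm,e}$ satisfy every inner cage edge, so they are constant on $B(x;1)\setminus\{x\}$, and the paper finishes by showing $\sigma^{-,e}=\varphi(\sigma^{+,e})$ (flip only at $x$), giving $\mathcal D_e=\{x\}$.

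With your $A_e$, any cluster $\mathcal M\subseteq B(x;1)\setminus\{x\}$ you try to flip has, on $\partial\mathcal M$, many edges of $\partial B(x;1)$ each carrying a \emph{large} coupling whose satisfaction status depends on the uncontrolled spins just outside $B(x;1)$; there is no reason these are violated in $\sigma(J)$, so the ``healing'' you need is unavailable and the energy balance is genuinely uncontrollable. Moreover, your surgery is aimed at $\sigma(J)$, whereas a cage edge in $\partial\mathcal D_e$ only guarantees a violation in \emph{one} of $\sigma^{\pm,e}$, not necessarily in $\sigma(J)$; so even the logical link between the hypothesis $b_0\in\partial\mathcal D_e$ and the surgery is missing. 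Your Step~1 (connectedness $\Rightarrow$ $\mathcal D_e=\{x\}$ once no inner cage edge is cut) is fine and in the spirit of Lemma~\ref{lem: droplet}; the fix is to redefine $A_e$ with small couplings on \emph{all} edges leaving $B(x;1)\setminus\{x\}$ and then run the comparison on $\sigma^{\pm,e}$ (not on $\sigma(J)$), as the paper does.
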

\begin{figure}[h]
\includegraphics[height=3.5cm]{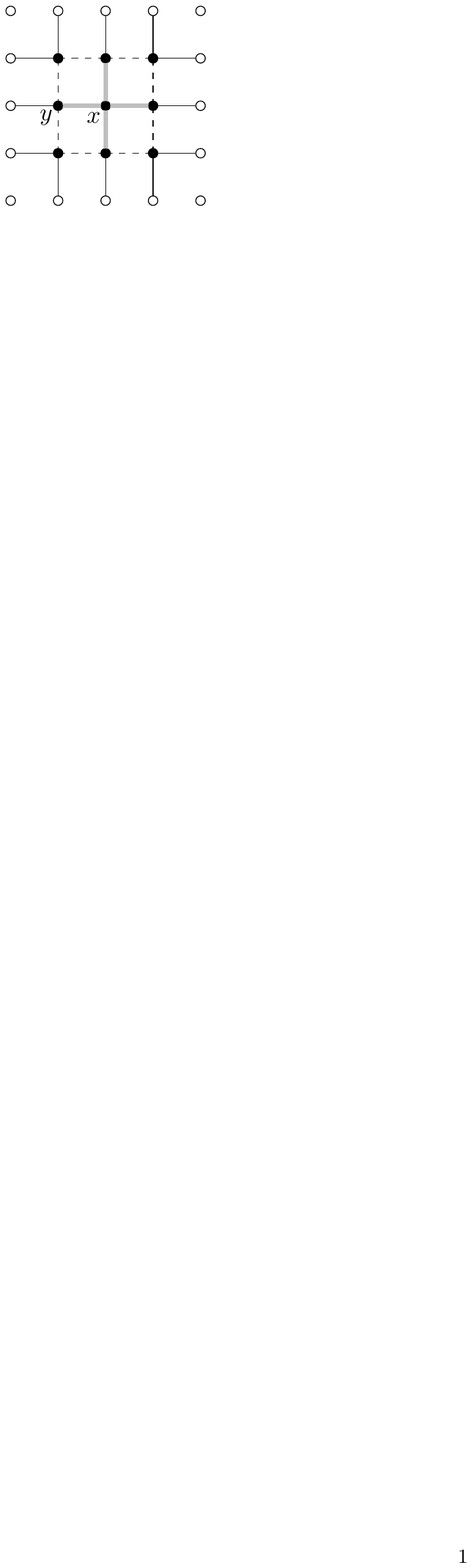}
\caption{A depiction of the defining conditions of the event $A_e$, $e=\{x,y\}$. The vertices in $B(x,1)$ are the black circles. Edges satisfying Condition (1) are the dotted lines. The ones satisfying Condition (2) are the the black lines, and the ones under Condition (3) are in grey. 
}
\label{fig: flex}
\end{figure}
The set $A_e$ of coupling realizations needed to prove Theorem \ref{thm: main}, as outlined in Section \ref{sect: method}, is defined by the following three conditions: let $v\in B(x;1)\setminus \{x\}$,
 
\begin{enumerate}
	\item if $w  \notin B(x;1)\setminus \{x\}$, then $|J_{vw}| < r;$
	\item if $w  \in  B(x;1)\setminus \{x\}$, then $|J_{vw}| > d\cdot 3^dr,$ and all such $J_{vw}$ have the same sign;
	\item $\sum_{w: \{x,w\}\in \Lambda^*} J_{xw}>0$. (If $J$ is negative with probability one, then take $<0$.)
\end{enumerate}

By construction, the set $A_e$ is 
an open set that depends on a finite number of couplings. Moreover,  its probability is positive and independent of $e$ by Assumption \ref{assn: support}.
We prove that $A_e\subset \{\mathcal{D}_e= \{x\}\}$, thereby implying Proposition \ref{prop: cage}. 
\begin{proof}[Proof of Proposition \ref{prop: cage}]
 We prove a stronger claim: if $u \in B(x;1) \setminus \{x\}$, then $\sigma^{+,e}_u = \sigma^{-,e}_u$. 
 This immediately implies Proposition~\ref{prop: cage}: $\mathcal{D}_e$ is connected (see Lemma \ref{lem: droplet}) and contains $x$ but no neighbor of $x$, so it must be $\{x\}$.
 
 Let $\mathcal{S} \subset \{-1, +1\}^\Lambda$ be the set of spin configurations $s$ for which all edges between neighbors of $x$ are satisfied, i.e., if $\{u,v\} \in \Lambda^*$ and $u,v\in  B(x;1) \setminus \{x\}$, then   $J_{uv} s_u s_v > 0$. 
 As a first step we show that $\sigma^{\pm,e}\in \mathcal S$ on $A_e$.
 Suppose it is not the case for $\sigma^{+,e}$, i.e., there exist $u_0,v_0$ as in the statement of the proposition, $J_{u_0 v_0}\sigma^{+,e}_{u_0}\sigma^{+,e}_{v_0} < 0$.
 We construct another spin configuration, denoted $\eta$, satisfying $\eta_e = +1$ for which $J_{uv} \eta_{u} \eta_v > 0$ for all $u,v$. We show that $\eta$ has lower energy than $\sigma^{+,e}$, contradicting the definition of $\sigma^{+,e}$.

 We set $\eta_x = +1$ and choose the value of $\eta$ at other sites as follows. If the common sign of the $J_{vw}$'s in item (2) of the definition of $A_e$ is positive, then we let $\eta_u = \sigma^{+,e}_u$ when $u \notin B(x;1) \setminus \{x\}$; when $u \in B(x;1) \setminus \{x\}$, we set $\sigma_u = +1$. If the common sign of the $J_{vw}$'s in item (2) is negative, we again let $\eta_u = \sigma_u$ when $u \notin  B(x;1) \setminus \{x\}$, but now set $\eta_u = (-1)^{1+\|u-x\|_1}$ when $u \in B(x;1) \setminus \{x\}$. 
	
	We claim that $H_{\Lambda, J}(\sigma^{+,e}) > H_{\Lambda, J}(\eta)$. Indeed, because $\sigma^{+,e}$ and $\eta$ may disagree only at $x$ or on $B(x;1) \setminus \{x\}$, 
	\begin{align}
	H_{\Lambda, J}(\sigma^{+,e}) - H_{\Lambda, J}(\eta)  &= \sum_{\substack{u, v:\, u \in  B(x;1) \setminus \{x\} }} J_{u v} (\eta_u \eta_v -  \sigma^{+,e}_u \sigma^{+,e}_v)\nonumber\\
	&= \sum_{\substack{u,v \in B(x;1) \setminus \{x\}} } J_{u v} (\eta_u \eta_v -  \sigma^{+,e}_u \sigma^{+,e}_v) + \sum_{\substack{u \in  B(x;1) \setminus \{x\} \\v \notin  B(x;1) \setminus \{x\} }} J_{u v} (\eta_u \eta_v -  \sigma^{+,e}_u \sigma^{+,e}_v)\label{eq: intermedsum}\\
	&\geq 2 |J_{u_0, v_0}| - d\cdot (2)(3^d)  \cdot \max_{\substack{u \in B(x;1) \setminus \{x\}, \\ v \notin B(x;1) \setminus \{x\}}} |J_{uv}| > 0\ .\nonumber
	\end{align}
	Here the first sum in \eqref{eq: intermedsum} is bounded by noting that each $J_{uv} \eta_u \eta_v$ term of that sum is positive, but at least the term $J_{u_0 v_0}\sigma^{+,e}_{u_0}\sigma^{+,e}_{v_0}$ is negative. The second sum in \eqref{eq: intermedsum} is controlled by lower-bounding each term by $- 2|J_{uv}|$; the final inequality comes from the definition of $A_e$.
This completes the contradiction and the proof in the case of $\sigma^{+,e}$. The proof in the case of $\sigma^{-,e}$ is similar.

Consider the bijection $\varphi: \mathcal{S} \to \mathcal{S}$ that flips the spin at $x$: $\varphi(s)_x = - s_x$ and $\varphi(s)_u = s_u$ for $u \neq x$. 
It remains to show that $\sigma^{-,e}=\varphi(\sigma^{+,e})$.
Observe that the map $\varphi$ maps spin configurations $s,s'$ such that $J_e s_e = J_e s'_e = 1$ to configurations $\varphi(s), \varphi(s')$ such that $J_e \varphi(s)_e = J_e \varphi(s')_e = -1$, and 
	\begin{equation}
	\label{eqn: phi id}
	H_{\Lambda, J}(s) - H_{\Lambda, J} (\varphi(s)) = H_{\Lambda, J}(s') - H_{\Lambda, J} (\varphi(s')).
	\end{equation}
On $A_e$, it must be that $s_x s_u$ is constant as $u$ ranges over neighbors of $x$ for $s\in \mathcal S$. This implies:
	\begin{equation} \label{eq: flipen} \text{if }s \in \mathcal{S},\quad  H_{\Lambda, J}(\varphi(s)) - H_{\Lambda, J}(s) =  2 \sum_{u:\{u,x\}\in \Lambda^*} J_{xu}  s_x s_u =  2 s_e \sum_{u} J_{xu} \ .\end{equation}

	Assume without loss of generality that $\sigma(J) = \sigma^{+,e}$. Suppose it were the case that $H(\sigma^{-,e}) < H(\varphi(\sigma^{+,e}))$ --- and in particular that $\sigma^{-,e} \neq \varphi(\sigma^{+,e})$. Then, it would also follow by \eqref{eqn: phi id} and \eqref{eq: flipen} that $H(\varphi(\sigma^{-,e})) < H(\sigma^{+,e}),$ contradicting the fact that $\sigma^{+,e}$ is the ground state. This completes the proof.
\end{proof}

To prove Theorem \ref{thm: main}, it remains to show that $\sigma_e(J)$ is constant on $A_e$.
\begin{proof}[Proof of Theorem \ref{thm: main}]
By Proposition \ref{prop: cage}, the critical droplet $\mathcal D_e$ is simply $\{x\}$ on $A_e$. 
In particular, the ground state is solely determined between $\sigma^{+,e}$ and $\sigma^{-,e}$ by the condition
$$
\sum_{u:\{u,x\}\in \Lambda^*} J_{xu}  \sigma_x \sigma_u>0\ .
$$
We know from the proof of Proposition \ref{prop: cage} that, on $A_e$, $\sigma^{\pm,e}_x\sigma^{\pm,e}_u$ is constant as $u$ ranges over the neighbors of $x$. 
Therefore the above condition is reduced to 
$$
\sigma_e\cdot\left(\sum_{u:\{u,x\}\in \Lambda^*} J_{xu}  \right)>0\ .
$$
Condition 3 and the above ensure that $\sigma_e$ is constant on $A_e$ as claimed.
\end{proof}

\subsection{A General Argument for Finite Critical Droplet}
\label{sect: general droplet}
In this section, we prove:
\begin{prop}
\label{prop: general droplet}
Let $A_e$ be an open set of $\R^{ (\Lambda\cup \partial \Lambda)^*}$ depending on a finite number of edges such that: $A_e\subset \{|\mathcal D_e|<K\}$ for some $K>0$ and $\PP(A_e)>0$ uniformly in $\Lambda$. Then there exists an open set $\widetilde A_e \subset A_e$ depending on a finite number of edges such that  $\PP(\widetilde A_e)>0$ uniformly in $\Lambda$, and the ground state is constant on $\widetilde A_e$. 
In particular, Theorem \ref{thm: main} holds for the event $\widetilde A_e$. 
\end{prop}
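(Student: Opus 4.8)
The plan is to refine the event $A_e$ so as to pin down not just the \emph{size} of the critical droplet but its exact location and the resulting value of $\sigma_e$. The starting observation is that, since $A_e$ depends on only finitely many edges and is contained in $\{|\mathcal D_e| < K\}$, the droplet $\mathcal D_e(J)$ — being a connected subgraph containing $x$ or $y$ of size less than $K$ (Lemma \ref{lem: droplet}) — takes only finitely many possible values as $J$ ranges over $A_e$. Partition $A_e$ (up to the null set $\mathcal C$) according to which connected set $\mathcal D$ is realized; at least one such piece, call it $A_e^{(\mathcal D)} = A_e \cap \{\mathcal D_e = \mathcal D\}$, has positive probability uniformly in $\Lambda$. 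The key point is that this piece need not a priori be open, since the event $\{\mathcal D_e = \mathcal D\}$ is defined by the strict inequalities \eqref{eqn: GS prop 2} ranging over \emph{all} subsets $\mathcal B$ with $e \notin \partial\mathcal B$ — infinitely many constraints — but all but finitely many of these involve edges outside the finite set on which $A_e$ depends, and on those the relevant flip energies are bounded below by a positive constant depending only on $\mathcal D$ and the distribution; so on a slightly smaller \emph{open} cylinder set $A_e' \subset A_e^{(\mathcal D)}$ of positive probability the identity $\mathcal D_e = \mathcal D$ persists.

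Next I would localize the ground state itself on $A_e'$. On $A_e'$ we have $\sigma = \sigma^{+,e}$ or $\sigma = \sigma^{-,e}$, and the two differ exactly on $\mathcal D$, a \emph{fixed} finite set. The selection between them is governed by the sign of the flip energy $\sum_{b \in \partial \mathcal D} J_b \sigma_b$. The spins $\sigma^{\pm,e}_b$ for $b \in \partial\mathcal D$ are determined by the configuration on $\mathcal D$ and on its neighbors; I claim that after intersecting with a further finite-codimension open condition — forcing the edges inside $\mathcal D$ and across $\partial\mathcal D$ to be large and of prescribed sign, much as in the proof of Proposition \ref{prop: cage} — one can make $\sigma^{\pm,e}$ restricted to $\mathcal D \cup \partial\mathcal D$ a fixed function (up to the global flip on $\mathcal D$ that distinguishes $+$ from $-$), with positive probability uniformly in $\Lambda$ (here one uses continuity of the distribution, which guarantees arbitrarily large couplings occur with positive probability, or more carefully Assumption-free: one only needs that the conditioning event has positive probability, which follows since it constrains finitely many couplings to an open set that the distribution charges). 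Call the resulting open cylinder set $\widetilde A_e$.

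Finally, on $\widetilde A_e$ the flip energy $\sum_{b \in \partial\mathcal D} J_b \sigma_b^{\pm,e}$ reduces to a fixed linear functional of the (large, prescribed-sign) boundary couplings, whose sign is therefore constant; hence the ground state selects the same one of $\sigma^{+,e}, \sigma^{-,e}$ throughout $\widetilde A_e$, and in particular $\sigma_e$ is constant there. Since $\widetilde A_e$ is open, depends on finitely many edges, and has positive probability uniform in $\Lambda$, Theorem \ref{thm: main} applies verbatim via the argument of Section \ref{sect: method}. The main obstacle I anticipate is the second paragraph: arguing cleanly that the \emph{infinitely many} defining inequalities for $\{\mathcal D_e = \mathcal D\}$ can be reduced to a finite open condition, uniformly in $\Lambda$ — this requires a quantitative lower bound on the "non-droplet" flip energies that does not degenerate as $\Lambda \to \infty$, which should follow from the fact that any $\mathcal B$ with $e \notin \partial\mathcal B$ and $\mathcal B \neq \mathcal D$ either has a boundary edge carrying a large prescribed coupling or differs from $\mathcal D$ only outside a bounded region, where the already-assumed structure of $A_e$ gives control.
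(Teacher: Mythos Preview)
Your strategy --- first pin down the droplet $\mathcal D$, then pin down the configuration of $\sigma^{\pm,e}$ near $\mathcal D$ by further coupling constraints --- is genuinely different from the paper's, but there is a real gap at the point you yourself flag, and a second one in your Step~4. The event $\{\mathcal D_e = \mathcal D\}$ is \emph{not} characterized by the inequalities \eqref{eqn: GS prop 2}: those record only that $\sigma^{\pm,e}$ are local minima for flips with $e \notin \partial\mathcal B$, whereas $\mathcal D_e = \mathcal D$ requires that $\mathcal D$ beat every competitor $\mathcal B$ with $e \in \partial\mathcal B$ in the comparison $\sum_{b\in\partial\mathcal B} J_b\sigma_b(J)$, and these comparisons involve the global ground state $\sigma(J)$ and hence all couplings. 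There is no uniform positive lower bound on the competing flip energies ``depending only on $\mathcal D$ and the distribution'' --- they are random and can come arbitrarily close to the droplet energy --- so your reduction to a finite open cylinder condition is not established. Separately, in Step~4 you intersect with an event forcing couplings near $\mathcal D$ to be large and of prescribed sign; but imposing such values can itself change which droplet is optimal, so you have not argued that the intersection with $\{\mathcal D_e = \mathcal D\}$ has positive probability (and ``arbitrarily large couplings'' presumes unbounded support, which is not assumed).

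The paper sidesteps both issues by working with the flexibility $\mathcal F_e(J) = |H_{\Lambda,J}(\sigma^{+,e}) - H_{\Lambda,J}(\sigma^{-,e})|$ rather than the droplet. On $A_e$ every droplet is connected and contained in a fixed $K$-neighborhood of $e$, so by Lemma~\ref{lem: flexderiv} the gradient $\nabla\mathcal F_e$ is supported on a fixed finite edge set and has norm at most $2K^*$; combined with the cylinder structure of $A_e$, this makes $\mathcal F_e|_{A_e}$ a function of only finitely many coordinates. One then takes $\widetilde A_e$ to be a small ball inside $A_e \cap \{\mathcal F_e > \delta\}$ (positive probability by \eqref{eqn: critical flex}); the gradient bound keeps $\mathcal F_e > 0$ throughout, and since $\sigma_e$ can switch only where $\mathcal F_e$ vanishes (by continuity), it is constant on $\widetilde A_e$ --- with no need to identify $\mathcal D$ or to impose any further coupling constraints.
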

The fact that the theorem holds is again by the reasoning of Section \ref{sect: method}. 
It remains to prove that the ground state can be made constant on a subset of $A_e$. 
For that purpose, we define the {\it flexibility at the edge $e$}
$$
\mathcal F_e(J) =\left|H_{\Lambda,J}(\sigma^{+,e})-H_{\Lambda,J}(\sigma^{-,e})\right|=2\sum_{\substack{ b = \{u,v\} :\\  u \in \mathcal D_e(J), v\in \partial \mathcal D_e(J) }} J_b\sigma_b(J)>0 \ .
$$
In other words, the flexibility is the minimal energy of all surfaces  going through $e$. 
The definition given above makes sense  whenever $J$ is not in the critical set $\mathcal C$. 
However, it can be extended to a continuous function on all of $\R^{\Lambda^*}$.
\begin{lem}\label{lem: flexderiv}
The flexibility $\mathcal F_e(J)$ extends to continuous function on $\R^{ (\Lambda\cup \partial \Lambda)^*}$. More precisely, it is a piecewise affine function of $J$ with 
$$
\frac{\partial \mathcal F_e(J)}{\partial J_b}=
\begin{cases}
2\sigma_b(J) \ &\text{ if $b\in \mathcal D_e(J)$},\\
0 \ &\text{ else,}
\end{cases}
$$
whenever $J\in \R^{(\Lambda\cup \partial \Lambda)^*}\setminus \mathcal C$. 
\end{lem}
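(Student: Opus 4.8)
The plan is to realize $\mathcal F_e$ as the absolute value of the difference of two minima of finitely many linear functions. For a fixed configuration $s$, the map $J\mapsto H_{\Lambda,J}(s)=-\sum_{\{x,y\}\in\lpl^*}J_{xy}s_xs_y$ is linear in $J$, with $\partial H_{\Lambda,J}(s)/\partial J_b=-s_b$ for $b=\{x,y\}$. Setting
$$
G^+(J)=\min_{s:\,s_e=+1}H_{\Lambda,J}(s),\qquad G^-(J)=\min_{s:\,s_e=-1}H_{\Lambda,J}(s),
$$
we have $H_{\Lambda,J}(\sigma^{\pm,e})=G^\pm(J)$ for $J\notin\mathcal C$, so $\mathcal F_e(J)=|G^+(J)-G^-(J)|$ there, and the right-hand side extends $\mathcal F_e$ to all of $\R^{\lpl^*}$. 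Since $G^+$ and $G^-$ are minima of finitely many linear functions, they are continuous, concave and piecewise linear; hence $G^+-G^-$ and therefore $\mathcal F_e=|G^+-G^-|$ are continuous and piecewise affine. This gives the first assertion.

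For the derivative, fix $J\notin\mathcal C$. By definition of $\mathcal C$ (see \eqref{eqn: critical set}), all configurations have distinct energies, so $\sigma^{+,e}(J)$ and $\sigma^{-,e}(J)$ are the \emph{unique} minimizers in their respective half-spaces; since there are finitely many configurations and each $J'\mapsto H_{\Lambda,J'}(s)$ is continuous, these strict inequalities persist on a neighborhood $U$ of $J$, so $\sigma^{\pm,e}(\cdot)$ are constant on $U$. Thus $G^\pm(J')=H_{\Lambda,J'}(\sigma^{\pm,e}(J))$ is linear on $U$ with $\partial G^\pm/\partial J_b=-\sigma^{\pm,e}_b(J)$. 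Moreover $\sigma^{+,e}(J)\neq\sigma^{-,e}(J)$ (they disagree at $e$), hence $G^+(J)\neq G^-(J)$, and so $\mathcal F_e=|G^+-G^-|$ is differentiable at $J$ with
$$
\frac{\partial\mathcal F_e}{\partial J_b}(J)=\sign\bigl(G^+(J)-G^-(J)\bigr)\bigl(\sigma^{-,e}_b(J)-\sigma^{+,e}_b(J)\bigr).
$$

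It remains to evaluate $\sigma^{-,e}_b-\sigma^{+,e}_b$, using that $\mathcal D_e$ is exactly the set of vertices where $\sigma^{+,e}$ and $\sigma^{-,e}$ differ. For $b=\{u,v\}$: if $u,v$ both lie in $\mathcal D_e$ or both lie in $\mathcal D_e^c$, then $\sigma^{+,e}_b=\sigma^{-,e}_b$ and the derivative vanishes; if exactly one endpoint lies in $\mathcal D_e$, i.e. $b\in\partial\mathcal D_e$, then $\sigma^{-,e}_b=-\sigma^{+,e}_b$, so $\sigma^{-,e}_b-\sigma^{+,e}_b=-2\sigma^{+,e}_b=2\sigma^{-,e}_b$. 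Finally, $\sigma(J)$ is whichever of $\sigma^{\pm,e}$ has the smaller energy: if $\sigma(J)=\sigma^{+,e}$ then $\sign(G^+(J)-G^-(J))=-1$ and $\sigma_b(J)=\sigma^{+,e}_b(J)$, giving $\partial\mathcal F_e/\partial J_b=2\sigma_b(J)$ for $b\in\partial\mathcal D_e$; if $\sigma(J)=\sigma^{-,e}$ then the sign is $+1$ and $\sigma_b(J)=\sigma^{-,e}_b(J)$, again giving $2\sigma_b(J)$. This is the claimed formula (reading ``$b\in\mathcal D_e$'' in the statement as ``$b\in\partial\mathcal D_e$''). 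The only real subtlety is the local-constancy step: one must invoke $J\notin\mathcal C$ to obtain strict, hence locally persistent, minimizers so that $G^\pm$ are genuinely affine near $J$ and differentiation through the $\min$ is legitimate; the remainder is linear algebra and sign bookkeeping.
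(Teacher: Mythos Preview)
Your proof is correct and follows essentially the same approach as the paper's: write $\mathcal F_e$ as the absolute difference of the two constrained ground-state energies $G^\pm(J)=\min_{s:\,s_e=\pm1}H_{\Lambda,J}(s)$, use continuity of minima of finitely many linear functions, and differentiate using local constancy of $\sigma^{\pm,e}$ on $\R^{\lpl^*}\setminus\mathcal C$. Your version is more explicit in the sign bookkeeping, and you correctly flag that the condition in the derivative formula should read $b\in\partial\mathcal D_e$ rather than $b\in\mathcal D_e$ (since $\mathcal D_e$ is a vertex set and $b$ is an edge); the paper's own proof implicitly uses the boundary-edge interpretation as well.
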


\begin{proof}
Note that by definition we have
$$
H_{\Lambda, J}(\sigma^{+,e})=\min_{s\in \{-1,+1\}^{\Lambda}: s_e=+1} H_{\Lambda, J}(s)\qquad H_{\Lambda, J}(\sigma^{-,e})=\min_{s\in \{-1,+1\}^{\Lambda}: s_e=-1} H_{\Lambda, J}(s)
$$
Clearly, the function $J\mapsto H_{\Lambda, J}(s)$ is a continuous function of $J$ for a fixed $s$.
Therefore the minimum of such functions over a finitely many values of $s$ is itself a continuous function.
This shows that $J\mapsto \mathcal F_e(J)$ extends to a continuous function on $\R^{ (\Lambda\cup \partial \Lambda)^*}$.

For the derivatives, observe that $\sigma^{\pm ,e}(J)$ are locally constant on the complement of $\mathcal C$. 
Therefore if the ground state is given by $\sigma^{+ ,e}(J)$, say, we have
$$
\mathcal F_e(J)=H_{\Lambda, J}(\sigma^{- ,e}(J))-H_{\Lambda, J}(\sigma^{+ ,e}(J))
=2\sum_{{\substack{ b = \{u,v\} :\\  u \in \mathcal D_e(J), v\in \partial \mathcal D_e(J) }} } J_b \sigma_b(J)\ .
$$
The claim on the derivatives is then obvious.
\end{proof}
By putting the coupling $J_e$ apart, the flexibility can also be written as
\begin{equation}
\label{eqn: critical flex}
\mathcal F_e(J)=2|J_e-\mathcal C_e(J)|\ ,
\end{equation}
where $\mathcal C_e$ does not depend on $J_e$ as a function of $J$. 
In particular, $\mathcal C_e(J)$ seen as a random variable is independent of $J_e$. 
\begin{proof}[Proof of Proposition \ref{prop: general droplet}]
Consider, for some $\delta>0$ (to be fixed later), the set
$$
\widetilde A_e=A_e\cap \{\mathcal F_e(J)>\delta\}\ .
$$
The set $\widetilde A_e$ is open, since $\mathcal F_e$ is continuous, and depends only a finite number of edges since $\mathcal F_e$ is the flip energy of the droplet, the size of which is bounded by $K$ on $A_e$. 
Moreover, by Equation \eqref{eqn: critical flex}, the parameter $\delta$ can be taken small enough so that $\PP(\mathcal F_e>\delta)$ is arbitrarily close to $1$. 
This implies that for $\delta$ small enough $\PP( \widetilde A_e)>0$ uniformly in $\Lambda$. 
By considering a subset of $\widetilde A_e$ if necessary, we can assume without loss of generality that $\widetilde A_e$ is a cylinder set whose cross-section is a finite-dimensional ball in $K^*$ coordinates, having finite radius. 

We now prove that the flexibility is strictly positive on $\widetilde A_e$. 
For $J,J'\in A_e$, Lemma \ref{lem: flexderiv} implies the following representation
\begin{equation}
\label{eqn: flex taylor}
\F_e(J')=\F_e(J) +\int_{J\to J'} \nabla \F_s (\mathbf r)\cdot \rd \mathbf r\ .
\end{equation}
By the last paragraph, the above only depends on $K^*$ coordinates. In particular,
the difference can be bounded by
$$
|\F_e(J')-\F_e(J)|\leq \|\nabla \F_e\|\cdot \|J-J'\|_{\R^{K^*}}\ .
$$
This can be made smaller than $\delta/2$ (uniformly in $\Lambda$). This is because $\|\nabla \F_e\|\leq 2 K^*$  on $\widetilde A_e$ by the assumption on the size of the droplet on $A_e$ and Lemma \ref{lem: flexderiv}, and $\|J-J'\|_{\R^{K^*}}$ can be made as small as we wish by reducing the radius of $\widetilde A_e$. 
Together with \eqref{eqn: flex taylor}, this implies that $F_e(J)>0$ on $\widetilde A_e$. 

We now conclude that the ground state is constant on $\widetilde A_e$. 
Suppose it is not. Then there must exist $J$ and $J'$ in $\widetilde A_e$ such that $\sigma^{+,e}$ is the ground state at $J$ and $\sigma^{-,e}$ is the ground state at $J'$. 
In particular, this implies
$$
\F_e(J)=H_{\Lambda, J}(\sigma^{- ,e}(J))-H_{\Lambda, J}(\sigma^{+ ,e}(J)) \qquad \F_e(J')=H_{\Lambda, J'}(\sigma^{+,e}(J'))-H_{\Lambda, J}(\sigma^{- ,e}(J'))\ .
$$
By continuity of $\F_e(J)$, this implies that any path from $J$ to $J'$ in $\widetilde A_e$ will contain at least one point $J_0$ with $\F_e(J_0)=0$. 
This contradicts the previous result.
\end{proof}

	\section{Proof of the positive-temperature Theorem \ref{thm: positive}.}
	We define more explicitly the positive-temperature Gibbs specification $\langle \cdot \rangle_t$: given functions $f, g$ on $\{-1, +1\}^\Lambda$ and fixed joint disorder realization $J, \varepsilon, \varepsilon'$, we set
	\[\langle f(\sigma) g(\sigma') \rangle_t = \frac{1}{Z_t Z_t'} \sum_{\sigma, \sigma' \in \{-1, +1\}^\Lambda} \exp\left(-\beta\left[H_{\Lambda, J + t \varepsilon}(\sigma) +H_{\Lambda, J + t \varepsilon'}(\sigma)\right] \right) = \langle f(\sigma) \rangle_t \langle g(\sigma') \rangle_t \ , \]
	where the partition function $Z_t = \sum_{\sigma \in \{-1, +1\}^\Lambda} \exp(-H_{\Lambda, J + t \varepsilon})$  ($Z_t'$ is defined analogously, replacing $\varepsilon$ with $\varepsilon'$).
	We prove the theorem, similarly to the proof in Section \ref{sect: ss}, using the assumption on the support of $J$ to ``super-satisfy'' individual edges. To start, we again decompose to isolate the contribution to $Q_\Lambda$ from each edge:
	\begin{equation}\label{eqn: langlands}\E[\langle Q_\Lambda(\sigma,\sigma')\rangle_t] =\frac{1}{|\Lambda^*|}\sum_{e\in \Lambda^*} \E[\langle\sigma_e\sigma_e'\rangle_t ]\  ;\end{equation}
	Once we show that each term of the above is lower-bounded by $(1-\delta) \PP(A_e)$ for $t < t_0$ (for appropriate choices of $\PP(A_e)$ and $t_0$), the theorem will be proved.

	For simplicity, we assume that $\PP(J_e > 0) > 0$; the adaptations needed to treat the case $\PP(J_e < 0) = 1$ are straightforward. We make nearly the same definition  of $A_e$ as in Section \ref{sect: ss}, namely: $A_e=\left\{J_{xy}>\sum_{i=1}^{2d-1}|J_{xz_i}| + a \right\}$, where the $z_i$'s are the neighbors of $x$ other than $y$ and where $a > 0$ is chosen such that $\PP(A_e) > 0$. 
	We compute
	\begin{align}
	\E[\langle \sigma_e  \sigma_e' \rangle_t] = \E[\langle \sigma_e  \sigma_e \rangle_t; A_e] + \E[\langle \sigma_e  \sigma_e' \rangle_t; A_e^c]\ .\label{eqn: twotermspos}
	\end{align}
	Conditioning on $J$ and using independence, we see that the second term of \eqref{eqn: twotermspos} is positive:
	\[ \E[\langle \sigma_e  \sigma_e \rangle_t'; A_e^c] = \E[\langle \sigma_e \rangle_t \langle \sigma_e' \rangle_t; A_e^c] =  \E[ \E[\langle \sigma_e \rangle_t \mid J]^2; A_e^c] \geq 0\ .\]
	It thus suffices to lower-bound the first term of \eqref{eqn: twotermspos} for a particular $e = \{x,y\}$. 
	
	We recall from Section \ref{sect: dropsizeone} that  $\varphi: \{-1, +1\}^\Lambda \to \{-1, +1\}^\Lambda$ denotes the bijection that flips the spin at $x$: $\varphi(s)_x = - s_x$ and $\varphi(s)_u = s_u$ for $u \neq x$. Since $\varphi$ maps configurations $s$ such that $J_e s_e > 0$ to configurations such that $J_e \varphi(s)_e <  0$, we can use $\varphi$ to compare the contribution of such pairs $s, \varphi(s)$ to the expectation in \eqref{eqn: langlands}. On the event  $A_e \cap (A_e - t \varepsilon) \cap (A_e - t \varepsilon')$, we note as in Section \ref{sect: ss} that there is a nonnegative energy cost for failing to satisfy edge $e$. More explicitly: for each $s$ such that $s_e = 1$, both $H_{\Lambda, J + t \varepsilon}(s) \leq H_{\Lambda, J + t \varepsilon}(\varphi(s)) - a$ and $H_{\Lambda, J + t \varepsilon'}(s) \leq H_{\Lambda, J + t \varepsilon'}(\varphi(s)) - a$ whenever $0 \leq t < t_0(\delta)$ for an appropriate choice of $t_0(\delta)$.

	On this event, we estimate the thermal average $\langle\sigma_e\rangle_t$: 
	\begin{align*}
	\langle \sigma_e \rangle_t &= Z_t^{-1}\sum_{\sigma \in \{-1, +1\}^\Lambda} \sigma_e \exp({-\beta H_{\Lambda, J + t \varepsilon}(\sigma)})\\ &= Z_t^{-1}\sum_{\substack{\sigma \in \{-1, +1\}^\Lambda\\ \sigma_e = 1}} \exp({-\beta H_{\Lambda, J + t \varepsilon}(\sigma)}) - \exp({-\beta H_{\Lambda, J + t \varepsilon}(\varphi(\sigma))})\\
	&\geq (e^{\beta a} - 1)/2\quad \text{on $A_e \cap (A_e - t \varepsilon) \cap (A_e - t \varepsilon')$, for $0 \leq t < t_0$. }
	\end{align*}
	An identical argument shows the same lower bound for $\langle \sigma'_e\rangle_t$ under the same conditions, so
	\[\langle \sigma_e \sigma'_e\rangle_t = \langle \sigma_e \rangle_t \langle \sigma'_e\rangle_t \geq \delta' := ((e^{\beta a} - 1)/2)^2\ \text{ on $A_e \cap (A_e - t \varepsilon) \cap (A_e - t \varepsilon')$, for $0 \leq t < t_0$} ; \]
	applying this in the first term of \eqref{eqn: twotermspos} and inserting our estimates into \eqref{eqn: langlands} completes the proof of the theorem. \qed

\bibliographystyle{plain}

\bibliography{bib_EA_absence_DC.bib}

\end{document}